\newtheorem{thr}{Theorem}
\newtheorem{claim}[thr]{Claim}
\newtheorem{obs}[thr]{Observation}
\theoremstyle{definition}
\theoremstyle{remark}
\numberwithin{equation}{section}
\journal{arXiv.org}
\begin{document}

\begin{frontmatter}

\title{Coloring the power graph of a semigroup}

\author{Yaroslav Shitov}

\ead{yaroslav-shitov@yandex.ru}

\address{National Research University Higher School of Economics, 20 Myasnitskaya Ulitsa, Moscow 101000, Russia}

\begin{abstract}
Let $G$ be a semigroup. The vertices of the power graph $\mathcal{P}(G)$ are the elements of $G$, and two elements are adjacent if and only if one of them is a power of the other. We show that the chromatic number of $\mathcal{P}(G)$ is at most countable, answering a recent question of Aalipour et al.
\end{abstract}

\begin{keyword}
power graph, chromatic number

\MSC[2010] 05C15, 20F99
\end{keyword}

\end{frontmatter}

\section{Introduction}

This note is devoted to the graph constructed in a special way from a given semigroup $G$. This graph is called the \textit{power graph} of $G$, denoted by $\mathcal{P}(G)$, and its vertices are the elements of $G$. Elements $g,h\in G$ are adjacent if and only if one of them is a power of the other, that is, if we have either $g=h^k$ or $h=g^k$ for some $k\in\mathbb{N}$. (Here an in what follows, $\mathbb{N}$ denotes the set of positive integers.) This concept has attracted some attention in both the discrete mathematics and group theory, see~\cite{Cam, CG}.

Let us consider a related directed graph $D(G)$ on the set $G$. Assume $D(G)$ contains an edge leading from $x$ to $y$ if and only if $y$ is a power of $x$. Clearly, the outdegree of any vertex of $D(G)$ is at most countable, and one gets the graph $\mathcal{P}(G)$ by forgetting about the orientations of the edges of $D(G)$. A classical result by Fodor (see~\cite{Fod, Kom}) shows that the chromatic number of $\mathcal{P}(G)$ does not exceed any uncountable cardinal. Is this chromatic number always at most countable? This question was studied in~\cite{AAC} by Aalipour et al. They answer this question in the special cases which include groups with finite exponent, free groups, and Abelian groups. However, the general version of this problem remained open even for groups, and it was posed in~\cite{AAC} as Question~42. This note gives an affirmative answer to this question. More than that, we prove that the chromatic number of $\mathcal{P}(G)$ is countable even if $G$ is an arbitrary power-associative magma. (Here, a \textit{magma} is a set endowed with a binary operation. A magma is called \textit{power-associative} if a sub-magma generated by a single element is associative.)

We proceed with the proof. The \textit{order} of an element $h\in G$ is the cardinality of the subsemigroup generated by $h$. An element $h\in G$ is called \textit{cyclic} if the subsemigroup generated by $h$ is a finite group. In other words, an element $h$ is cyclic if and only if the equality $h=h^{n+1}$ holds for some positive integer $n$. If $h$ is not cyclic but has a finite order, then the \textit{pre-period} of $h$ is defined as the largest $p$ such that the element $h^p$ occurs in the sequence $h,h^2,h^3,\ldots$ exactly once.

\section{Coloring the elements of finite orders}

The following claim allows us to split the set of all cyclic elements into a union of countably many independent sets.

\begin{claim}\label{claim1}
Fix a number $n\in\mathbb{N}$. The subgraph of $\mathcal{P}(G)$ induced by the set of cyclic elements of order $n$ is a union of cliques of size at most $n$.
\end{claim}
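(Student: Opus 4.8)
The plan is to show that the induced subgraph decomposes according to the subsemigroups generated by its vertices. First I would record the structure of a cyclic element $h$ of order $n$: by power-associativity the submagma $\langle h\rangle$ is associative, hence a finite monogenic semigroup, and since $h$ is cyclic this semigroup is a group; being generated by a single element it must be the cyclic group of order $n$, with underlying set $\{h,h^2,\dots,h^n\}$ and identity $h^n$. In particular, every element of $\langle h\rangle$ is itself a power of $h$.

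Next I would introduce the equivalence relation $g\sim h$ defined by $\langle g\rangle=\langle h\rangle$ on the set of cyclic elements of order $n$, and argue that its classes are exactly the cliques of the induced subgraph. For the key step, suppose two such elements $g,h$ are adjacent, say $g=h^{k}$. Then $g\in\langle h\rangle$, so $\langle g\rangle\subseteq\langle h\rangle$; as both subgroups have cardinality $n$, this forces $\langle g\rangle=\langle h\rangle$, i.e.\ $g\sim h$. Contrapositively, no edge of $\mathcal{P}(G)$ joins vertices lying in distinct classes, so the induced subgraph is a disjoint union over the classes.

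It then remains to see that each class is a clique of size at most $n$. If $g\sim h$, then $g$ and $h$ generate the same cyclic group $H$ of order $n$; since each lies in $H$ and the other is a generator of $H$, each is a power of the other, so the two are adjacent and the class is complete. Finally, a cyclic group of order $n$ has exactly $\varphi(n)\le n$ generators, and a class consists precisely of the generators of the corresponding $H$; hence every clique has at most $n$ vertices. The only mildly delicate point is the cardinality argument that a subgroup of $\langle h\rangle$ of the same finite order $n$ must be all of $\langle h\rangle$; everything else is bookkeeping, and the group-theoretic facts apply verbatim because the relevant submagmas are associative by hypothesis.
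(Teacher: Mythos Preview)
Your proof is correct and follows essentially the same approach as the paper's, identifying the cliques with the generator sets of the cyclic subgroups of order $n$. The only cosmetic difference is in the key step: the paper shows that $x=y^{p}$ forces $\gcd(p,n)=1$ and then invokes B\'ezout to obtain $y=x^{q}$, whereas you reach $\langle g\rangle=\langle h\rangle$ by the cardinality argument $|\langle g\rangle|=|\langle h\rangle|=n$.
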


\begin{proof}
Denote this induced subgraph by $P'$. Let $\thicksim$ be the relation on $P'$ containing those pairs $(x,y)$ such that $x$ is a power of $y$; this relation is clearly reflexive and transitive. Assuming that $x\thicksim y$, we get $x=y^p$, and we note that $p$ is relatively prime to $n$ because the orders of $x,y$ are equal to $n$. So we get $pq+p'n=1$ for some $p'\in\mathbb{Z}$, $q\in\mathbb{N}$, which shows that $y=x^q$. Therefore, $\thicksim$ is an equivalence relation, and every equivalence class is a subset of the set of powers of some $x\in P'$. 
\end{proof}

As we see from the proof, the sizes of the cliques as in Claim~\ref{claim1} are equal to $\varphi(n)$, where $\varphi$ is the Euler's totient function. This result is similar to Theorem~15 in~\cite{AAC}. Now we are going to prove that the set of all non-cyclic elements of finite orders can be represented as a union of countably many independent sets. We need the following observation.

\begin{obs}\label{obs111}
If $g\in G$ has a finite order $n$ and pre-period $p$, then $g^q$ is cyclic for all $q>p$.
\end{obs}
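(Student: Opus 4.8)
The plan is to make precise the intuition that the sequence $g,g^2,g^3,\ldots$ consists of a non-repeating ``tail'' of length $p$ followed by a periodic ``cycle'', and that every $g^q$ with $q>p$ sits inside this cycle and therefore generates a finite group. Throughout I use that the submagma generated by $g$ is associative (power-associativity), so that $g^a g^b=g^{a+b}$ and $(g^a)^b=g^{ab}$ hold for all $a,b\in\mathbb{N}$.

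First I would pin down the cycle. Let $S$ be the set of indices $k$ for which the value $g^k$ appears more than once in the sequence. The key structural fact is that $S$ is upward closed: if $g^a=g^b$ with $a\neq b$, then multiplying by $g$ gives $g^{a+1}=g^{b+1}$, so $k\in S$ forces $k+1\in S$. Since $g$ has finite order but the list $g^1,g^2,\ldots$ is infinite, the pigeonhole principle makes $S$ a nonempty up-set, hence $S=\{p+1,p+2,\ldots\}$ for some $p$; comparing with the definition of pre-period (the complement $\{1,\dots,p\}$ consists exactly of the indices whose value appears once) identifies this $p$ with the pre-period of $g$.

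Next I would extract the period. Because $g^{p+1}$ is repeated while each of $g^1,\dots,g^p$ appears only once, the second occurrence of $g^{p+1}$ cannot be among the first $p$ terms, so it happens at a strictly later index: there is a $d\ge 1$ with $g^{p+1}=g^{p+1+d}$. Multiplying by powers of $g$ propagates this to $g^{k}=g^{k+d}$, and then by iteration to $g^{k}=g^{k+td}$, for every $k\ge p+1$ and every $t\ge 0$.

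Finally, for any $q>p$ I would simply compute with exponents. Since $q\ge p+1$, periodicity with $t=q$ gives $(g^q)^{d+1}=g^{q(d+1)}=g^{q+qd}=g^q$. Thus $g^q=(g^q)^{d+1}$, which is exactly the criterion stated in the introduction for $g^q$ to be cyclic. I expect the only real obstacle to be the structural step establishing that $S$ is an up-set and that the repetition of $g^{p+1}$ is forward rather than backward; once the periodicity $g^k=g^{k+td}$ is in hand for $k\ge p+1$, the conclusion is a one-line exponent manipulation.
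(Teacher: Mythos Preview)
Your argument is correct and follows essentially the same route as the paper: establish periodicity past index $p$ and then verify $(g^q)^{d+1}=g^q$ from the exponent identity. The only difference is cosmetic---the paper skips your structural analysis of the up-set $S$ by directly invoking $g^{n+1}=g^{p+1}$ (so the period is $d=n-p$) and then performs the same one-line computation.
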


\begin{proof}
We have $g^{n+1}=g^{p+1}$, so that $(g^q)^{n-p+1}=g^{p+1}g^{(n-p)q}g^{q-p-1}=g^q$.
\end{proof}

\begin{claim}\label{claim11}
Let $g,h\in G$ be distinct elements with finite orders. If $g,h$ have the same pre-period $p$, then they are non-adjacent in $\mathcal{P}(G)$. 
\end{claim}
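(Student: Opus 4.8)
The plan is to argue by contradiction, exploiting the fact that raising an element to a power $k\ge 2$ strictly shrinks its pre-period. Suppose $g$ and $h$ are adjacent; without loss of generality $g=h^k$ for some $k\in\mathbb{N}$. Since $g\ne h$, we may assume $k\ge 2$. I will show that the pre-period of $h^k$ is strictly smaller than that of $h$, which contradicts the hypothesis that both equal $p$.

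First I would record the internal structure of the power sequence of a single non-cyclic element $h$ of finite order with pre-period $p$. By the definition of the pre-period together with a short propagation argument, the elements $h,h^2,\ldots,h^p$ each occur exactly once in the sequence $h,h^2,h^3,\ldots$, whereas $h^{p+1},h^{p+2},\ldots$ all occur more than once: indeed, if some $h^m$ with $m\le p$ satisfied $h^m=h^{m+d}$ for $d\ge 1$, then $h^p=h^{p+d}$ would repeat, contradicting the maximality of $p$. Thus the tail consists precisely of the exponents $1,\ldots,p$, and by Observation~\ref{obs111} the exponents exceeding $p$ lie in a cycle of period $\ell$ for some $\ell\ge 1$.

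Next I would compute the pre-period of $g=h^k$. Writing $g^j=h^{kj}$, the sequence of powers of $g$ is the subsequence of the powers of $h$ taken at exponents $k,2k,3k,\ldots$. If $kj\le p$ then $h^{kj}$ is a tail element of $h$, hence occurs exactly once in the full sequence and a fortiori exactly once among the powers of $g$. If $kj>p$ then $h^{kj}$ lies in the cycle, and $g^{j+\ell}=h^{kj+k\ell}=h^{kj}=g^j$ shows that $g^j$ repeats. Consequently $g^j$ occurs exactly once if and only if $kj\le p$, so the pre-period of $g$ equals $\lfloor p/k\rfloor$.

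Finally, since $h$ is non-cyclic we have $p\ge 1$ (otherwise $h=h^{1+d}$ would make $h$ cyclic), and for $k\ge 2$ this forces $\lfloor p/k\rfloor\le\lfloor p/2\rfloor<p$. Thus $g$ and $h$ cannot share the pre-period $p$, a contradiction; hence $g$ and $h$ are non-adjacent. The main obstacle is the middle step: pinning down exactly which powers of $g$ occur only once, for which the clean bookkeeping of tail versus cycle exponents inherited from $h$ is the crucial device.
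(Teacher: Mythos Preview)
Your argument is correct and follows the same contradiction strategy as the paper (assume $g=h^k$ with $k\ge 2$ and show the pre-period of $g$ cannot be $p$). The paper's version is more concise: instead of computing the exact pre-period $\lfloor p/k\rfloor$ of $h^k$, it just notes via Observation~\ref{obs111} that $g^p=h^{kp}$ is cyclic (since $kp>p$), which already forces $g^p$ to repeat and contradicts $g$ having pre-period $p$.
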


\begin{proof}
Assume the result is not true. Then we have $g=h^t$, for some $t>1$. (We omit the case $h=g^t$, which is considered similarly.) Observation~\ref{obs111} shows that $g^p=h^{tp}$ is a cyclic element, which contradicts to the initial assumption that $p$ is the pre-period of $g$.
\end{proof}

\section{Coloring the elements of the infinite order}

In the following claim, we assume $m,n\in\mathbb{N}$, and we denote by $G(x,m,n)$ the set of all $y\in G$ satisfying $x^m=y^n$.

\begin{claim}\label{claim2}
Let $x\in G$ be an element of the infinite order. Then the set $G(x,m,n)$ is independent in $\mathcal{P}(G)$.
\end{claim}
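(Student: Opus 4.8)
The plan is to argue by contradiction. Suppose two distinct elements $y,z\in G(x,m,n)$ are adjacent in $\mathcal{P}(G)$. Adjacency means one of them is a power of the other, and by the evident symmetry between the roles of $y$ and $z$ it suffices to treat the case $z=y^k$ for some $k\in\mathbb{N}$; since $y\neq z$, we must have $k\geq 2$. All the computations below will take place inside the submagma generated by $y$, which is associative and commutative by power-associativity, so I may freely manipulate exponents such as $(y^k)^n=y^{kn}$.

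First I would combine the two defining relations with the adjacency relation. By the definition of $G(x,m,n)$ we have $y^n=x^m$ and $z^n=x^m$, hence $y^n=z^n$. Substituting $z=y^k$ yields
$$ y^n=z^n=(y^k)^n=y^{kn}. $$
Since $k\geq 2$, the exponents $n$ and $kn$ are distinct, so the element $y^n$ occurs more than once in the sequence $y,y^2,y^3,\ldots$. This forces the submagma generated by $y$ to be finite, that is, $y$ has finite order.

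The final step is to transfer this finiteness to $x$ and contradict the hypothesis that $x$ has infinite order. Because $y$ has finite order, the set $\{(y^n)^j:j\in\mathbb{N}\}$ is finite. But $(y^n)^j=(x^m)^j=x^{jm}$, so the powers $x^m,x^{2m},x^{3m},\ldots$ take only finitely many values. Hence $x^{j_1 m}=x^{j_2 m}$ for some $j_1<j_2$, which shows that $x$ has finite order, contradicting the assumption. This completes the argument.

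I expect no serious obstacle here. The only points requiring care are the appeal to power-associativity that legitimizes the exponent arithmetic, and the observation that a single repeated value in the power sequence already forces a finite order — the same mechanism underlying Observation~\ref{obs111}. One should also verify that the symmetric case $y=z^k$ is genuinely handled identically, producing $z^n=z^{kn}$ and the same contradiction.
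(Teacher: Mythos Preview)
Your argument is correct and follows the same idea as the paper's proof: combine the three relations $y^n=x^m$, $z^n=x^m$, and one of $y,z$ being a power of the other to produce a repeated power, contradicting infinite order. The only difference is that the paper is slightly more direct: from $y=z^k$ it writes the single chain $x^{mk}=z^{kn}=y^n=x^m$ and immediately concludes $k=1$ from the infinite order of $x$, whereas you first deduce that $y$ has finite order and then transfer this to $x$; this detour is unnecessary but harmless.
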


\begin{proof}
Assume that $k\in\mathbb{N}$ and $y,z\in G$ are such that $x^m=y^n$, $x^m=z^n$, $y=z^k$. Then we have $x^{mk}=z^{kn}=y^n=x^m$. Since $x$ has the infinite order, we get $k=1$, which implies $y=z$ and completes the proof.
\end{proof}

In what follows, we denote by $\pi\subset G$ the set of elements of finite orders and by $\mathcal{P}_*(G)$ the graph obtained from $\mathcal{P}(G)$ by removing the vertices in $\pi$.

\begin{claim}\label{claim3}
Let $x\in G$ be an element of the infinite order. We define the set $C(x)=\bigcup_{m,n\in\mathbb{N}}G(x,m,n)$. Then $C(x)$ is a connected component of $\mathcal{P}_*(G)$. 
\end{claim}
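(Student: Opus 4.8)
The plan is to verify the three properties that together show $C(x)$ is a connected component of $\mathcal{P}_*(G)$: that $C(x)$ is contained in the vertex set of $\mathcal{P}_*(G)$, that the subgraph it induces is connected, and that no edge of $\mathcal{P}_*(G)$ leaves $C(x)$.

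First I would check that every $y\in C(x)$ has infinite order, so that $C(x)\subseteq V(\mathcal{P}_*(G))$. If $y\in G(x,m,n)$, then $x^m=y^n$; since $x$ has infinite order, so does $x^m$ (by power-associativity the elements $x^m,x^{2m},x^{3m},\ldots$ are pairwise distinct), and hence $y^n=x^m$ has infinite order, which forces $y$ to have infinite order. I would also record that $x\in G(x,1,1)\subseteq C(x)$.

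Next, for connectivity, I would show that every $y\in C(x)$ is joined to $x$ by a path of length at most two. Writing $w=x^m=y^n$ for a chosen $y\in G(x,m,n)$, the element $w$ is a power of $x$ and simultaneously a power of $y$, so $x-w-y$ is a path in $\mathcal{P}(G)$; all three vertices have infinite order and lie in $C(x)$ (indeed $w=x^m\in G(x,m,1)$), so this is a path inside the subgraph of $\mathcal{P}_*(G)$ induced on $C(x)$. Thus $C(x)$ is connected. For closure, suppose $y\in C(x)$ is adjacent in $\mathcal{P}_*(G)$ to some $z$, and fix $m,n$ with $x^m=y^n$. If $z=y^k$, then power-associativity gives $z^n=y^{kn}=(y^n)^k=x^{mk}$, so $z\in G(x,mk,n)\subseteq C(x)$; if instead $y=z^k$, then $x^m=y^n=z^{kn}$, so $z\in G(x,m,kn)\subseteq C(x)$. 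Either way $z\in C(x)$, so $C(x)$ is closed under adjacency. Being connected and closed, it is exactly one connected component.

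I expect the argument to be essentially routine; the only points demanding care are the systematic use of power-associativity to justify the index arithmetic $(y^a)^b=y^{ab}$, and the bookkeeping ensuring that every vertex appearing in the connecting path (in particular the common power $w$) genuinely has infinite order and therefore survives in $\mathcal{P}_*(G)$.
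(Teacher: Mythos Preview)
Your proposal is correct and follows essentially the same approach as the paper: both establish connectivity by linking elements of $C(x)$ through a common power of $x$, and both prove closure under adjacency by a straightforward index computation. Your argument is in fact slightly more careful than the paper's, since you explicitly verify that every $y\in C(x)$ has infinite order (so $C(x)\subseteq V(\mathcal{P}_*(G))$) and that the intermediate vertex $w$ survives in $\mathcal{P}_*(G)$; the paper leaves these points implicit.
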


\begin{proof}
If we have $x^{m_1}=g^{n_1}$, $x^{m_2}=h^{n_2}$ with $m_1,m_2,n_1,n_2\in\mathbb{N}$, then both $g$ and $h$ are adjacent to $x^{m_1m_2}\in C(x)$, which shows that $C(x)$ is connected. Now assume that an element $z\in \mathcal{P}_*(G)$ is adjacent to a vertex $y$ in $C(x)$. Then we have $x^m=y^n$ and $y^{p}=z^{q}$ with positive integers $m,n,p,q$ (and either $p=1$ or $q=1$, but this fact is not relevant for our proof). We get $x^{mp}=z^{nq}$, which implies that $z$ belongs to $C(x)$ as well. In other words, the vertices in $C(x)$ can be adjacent only to vertices in $C(x)$.
\end{proof}

Now we are ready to prove our main result, which states that $G$ is a union of countably many independent subsets of $\mathcal{P}(G)$. Claim~\ref{claim1} shows that the subgraph of $\mathcal{P}(G)$ induced by the cyclic elements can be covered by countably many independent sets. Claim~\ref{claim11} proves the same result for the subgraph induced by those elements that have finite orders but are not cyclic. These claims together allow us to cover the set $\pi$ by countably many independent sets of $\mathcal{P}(G)$.

We denote by $\{C_\alpha\}$ the set of all connected components of the graph $\mathcal{P}_*(G)$, which is obtained from $\mathcal{P}(G)$ by removing the vertices in $\pi$. We choose an element $x_\alpha$ in every connected component $C_\alpha$, and we deduce from Claim~\ref{claim3} that $C_\alpha=C(x_\alpha)$ for all indexes $\alpha$. Claim~\ref{claim2} shows that every $C(x_\alpha)$ is the union of the independent sets $G(x_\alpha,m,n)$ over all pairs of positive integers $(m,n)$. We see that $G\setminus\pi$ is the union of the independent sets $\cup_{\alpha}G(x_\alpha,m,n)$, which completes the proof.


\begin{thebibliography}{99}

\bibitem{AAC}
G. Aalipour, S. Akbari, P. J. Cameron, R. Nikandish, F. Shaveisi, On the structure of the power graph and the enhanced power graph of a group, preprint (2016) arXiv:1603.04337.

\bibitem{Cam}
P. J. Cameron, The power graph of a finite group, II, J. Group Theory 13 (2010) 779--783.

\bibitem{CG}
P. J. Cameron, S. Ghosh, The power graph of a finite group, Discrete Math. 311 (2011) 1220--1222.

\bibitem{Fod}
G. Fodor, Proof of a conjecture of P. Erd\H{o}s, Acta. Sci. Math. Hung. 14 (1952) 219--227.

\bibitem{Kom}
P. Komj\'{a}th,  A note on uncountable chordal graphs, Discrete Math. 338 (2015) 1565--1566.
\end{thebibliography}
\end{document}